\newcommand{\N}{\mathbb{N}}
\newcommand{\Z}{\mathbb{Z}}
\newcommand{\F}{\mathbb{F}}
\theoremstyle{plain}
\newtheorem{theorem}{Theorem}[section]
\newtheorem{conjecture}[theorem]{Conjecture}
\newtheorem{corollary}[theorem]{Corollary}
\theoremstyle{definition}
\newtheorem{example}[theorem]{Example}
\theoremstyle{remark}
\theoremstyle{question}
\newtheorem{question}[theorem]{Question}
\numberwithin{equation}{section}
\DeclareMathAlphabet{\mathbbold}{U}{bbold}{m}{n}
\author{
Charles Burnette \\
Department of Mathematics \\
Xavier University of Louisiana \\
New Orleans, LA 70125 \\
\href{mailto:cburnet2@xula.edu}{\texttt{cburnet2@xula.edu}}
}
\title{On power maps over weakly periodic rings}
\begin{document}

\thispagestyle{empty}

\maketitle

\begin{abstract}
A ring $R$ is called \textit{weakly periodic} if every $x \in R$ can be written in the form $x = a + b,$ where $a$ is nilpotent and $b^m = b$ for some integer $m > 1.$ The aim of this note is to consider when a nonzero nilpotent element $r$ is the period of some power map $f(x) = x^n,$ in the sense that $f(x + r) = f(x)$ for all $x \in R,$ and how this relates to the structure of weakly periodic rings.

In particular, we provide a new proof of the fact that weakly periodic rings with central and torsion nilpotent elements are periodic commutative torsion rings. We also prove that $x^n$ is periodic over such rings whenever $n$ is not coprime with each of the additive orders of the nilpotent elements. These are in fact the only periodic power maps over finite commutative rings with unity. Finally, we describe and enumerate the distinct power maps over Corbas $(p, k, \phi)$-rings, Galois rings, $\Z/n\Z,$ and matrix rings over finite fields.\\

\noindent \textbf{Keywords:} nilpotent, potent, power maps, torsion, weakly periodic rings\\
\noindent \textbf{2020 Mathematics Subject Classification:} 11T30, 16N40

\end{abstract}

\section{Introduction}

Throughout, $R$ is a ring (not necessarily commutative or unital) and $R^+$ is the additive group of $R.$ By order of a ring element we will always mean its group-theoretic order as a member of $R^+.$ The set of nilpotent elements of $R$ will be denoted by $\textup{Nil}(R).$ The index of a nilpotent $x$ is the smallest positive integer $n$ such that $x^n = 0,$ and the index of $\textup{Nil}(R)$ is the largest index, if extant, among all elements of $\textup{Nil}(R).$ Any additional terminology and notation not explicitly defined herein are standard in the literature.

A ring $R$ is called \textit{periodic} if for each $x \in R$ the set $\{x^n : n \in \N\}$ is finite. Equivalently, for each $x \in R,$ there are positive integers $m(x)$ and $n(x)$ such that $x^{m(x) + n(x)} = x^{m(x)}.$ If the aforementioned $n(x)$ can be taken to be constant across all $x,$ then the smallest such constant permissible is called the \textit{exponential period} of $R,$ which we will denote by $\mu_1 := \mu_1(R),$ and the \textit{onset of exponential periodicity} of $R,$ which we will denote by $\mu_0 := \mu_0(R)$, is given by
\[\mu_0 = \max_{x \in R}\min\{m \in \N : x^{m + \mu_1} = x^m\},\]
provided this maximum exists. Otherwise, we say that $\mu_1$ and $\mu_0$ are infinite. Notice that $\mu_0$ is no smaller than the index of $\textup{Nil}(R)$ and that, for rings with unity, $\mu_1$ is no smaller than the exponent of the group of units $R^{\times}.$

An element $x \in R$ is called potent if $x^m = x$ for some integer $m > 1.$ Let $\textup{Pot}(R)$ denote the set of potent elements of $R.$ A ring $R$ is called \textit{weakly periodic} if $R = \textup{Nil}(R) + \textup{Pot}(R).$ We remark that if $R = \textup{Nil}(R) \cup \textup{Pot}(R),$ then $\mu_0$ is clearly the index of $\text{Nil}(R).$ If $R = \textup{Pot}(R),$ then $R$ is called a $J$-\textit{ring}. It is well known that every $J$-ring is commutative \cite{Jacobson}.

Some obvious examples of periodic rings are finite rings, Boolean rings, and nil rings. In fact, finite fields and Boolean rings are $J$-rings. Bell \cite{Bell} proved that periodic rings are weakly periodic, but the status of the converse is still unresolved. Periodicity has been established for several special classes of weakly periodic rings though (e.g. \cite{AHY}, \cite{BK}, \cite{BT}, \cite{GTY}, \cite{Yaqub}).

The power maps over a ring are the functions of the form $f(x) = x^n$ for some fixed $n \in \N.$ In this note, we assess what power map ``oscillations'' may inform us about the arithmetical structure of weakly periodic rings. In particular, we will show that if the nilpotent elements of $R$ are additively torsion and multiplicatively central, then each one is a period of some power map. This leads us to an alternative proof of the fact that weakly periodic rings with this property are periodic commutative torsion rings. A mild generalization of this was proved by Bell and Tominaga in \cite{BT}, who relied on Pierce decompositions and Chacron's periodicity criterion \cite{Chacron}. We will also prove that $x^n$ is periodic over such rings whenever $n$ is not coprime with each of the orders of the nilpotent elements. This will allow us to derive tight lower and upper bounds for the number of periodic power maps over commutative rings when $\mu_1$ and $\mu_0$ are both finite. Assorted examples are peppered throughout to help illustrate the results. For ease of navigation, Table 1 catalogs the most instructive and concrete examples featured in this paper.

\begin{table}
\centering
\caption{Periodic power map enumerations for some rings}
\resizebox{\columnwidth}{!}{%
\begin{tabular}{l | l  l  l}
\hline
\textit{Ring $R$} & $\mu_1(R)$ & $\mu_0(R)$ & \# \textit{of periodic power maps} \\ \hline \hline
\multicolumn{1}{c|}{ }&\multicolumn{3}{|c}{ }\\[-0.4cm]
\begin{tabular}{@{}l@{}} Corbas \\ $(p, k, \phi)$-ring \\ (pp. 3--4, 9) \end{tabular} & $p(p^k - 1)$ & 2 & \begin{tabular}{@{}l@{}} $p^k - 1$ if $\phi$ is the identity, \\ 0 otherwise \end{tabular} \\[0.4cm] \hline
\multicolumn{1}{c|}{ }&\multicolumn{3}{|c}{ }\\[-0.4cm]
\begin{tabular}{@{}l@{}} $R_1 \times R_2$, \\ $R_1$ reduced, \\ $R_2 \neq 0$ nil \\ (p. 5) \end{tabular} & $\mu_1(R_1)$ & $\mu_0(R_2) = \text{index of}\ R_2$ & \begin{tabular}{@{}l@{}}at least $\mu_1(R_1),$ \\ at most $\mu_1(R_1) + \mu_0(R_2) - 2$ \end{tabular} \\[0.4cm] \hline
\multicolumn{1}{c|}{ }&\multicolumn{3}{|c}{ }\\[-0.4cm]
\begin{tabular}{@{}l@{}} $\text{GR}(p^k, d),$ $d \geq 2$ \\ (p. 10) \end{tabular} & $p^{k - 1}(p^d - 1)$ & $k$ & \begin{tabular}{@{}l@{}} $p^{k - 2}(p^d - 1) + \left\lfloor\frac{k - 1}{p}\right\rfloor$ if $k \geq 2,$ \\ 0 if $k = 1$ \end{tabular} \\[0.4cm] \hline
\multicolumn{1}{c|}{ }&\multicolumn{3}{|c}{ }\\[-0.4cm]
\begin{tabular}{@{}l@{}} $\Z/n\Z$ \\ (pp. 10--11) \end{tabular} & \begin{tabular}{@{}l@{}} Carmichael \\ function $\lambda(n)$ \end{tabular} & \begin{tabular}{@{}l@{}} $E(n) :=$ maximal exponent \\ in prime factorization of $n$ \end{tabular} & $\sum\limits_{\substack{d\, |\, \textup{rad}(n/\text{rad}(n)) \\ d \neq 1}} (-1)^{\omega(d) + 1}\!\left\lfloor\frac{\lambda(n) + E(n) - 1}{d}\right\rfloor$ \\[0.4cm] \hline
\multicolumn{1}{c|}{ }&\multicolumn{3}{|c}{ }\\[-0.4cm]
\begin{tabular}{@{}l@{}} $\mathcal{M}_n(\F_q)$ \\ (pp. 11--12) \end{tabular} & \begin{tabular}{@{}l@{}} the exponent \\ of $\text{GL}(n, \F_q)$ \end{tabular} & $n$ & 0 
\end{tabular}
}
\end{table}

\section{Periodic Power Maps}

If $G$ is an additive group and $X$ is a set, then a function $\alpha : G \rightarrow X$ is \textit{periodic} if there exists an $h \in G\backslash \{0\}$ such that $\alpha(g + h) = \alpha(g)$ for all $g \in G.$ Such an element $h$ is called a \textit{period} of $\alpha.$ It is easy to see that the periods of a function together with $0$ form a subgroup of $G.$ We will call this subgroup $\text{Per}(f).$ Clearly, a power map over a ring $R$ is periodic as a function on $R^+$ only if $\text{Per}(f) \subseteq \text{Nil}(R).$ On the other hand, nilpotence alone does not guarantee that an element is the period of some power map. Here are a couple counterexamples.

\begin{example}
Let $R = \prod_{i \in \N} \Z/p_i^2\Z,$ where $p_i$ is the $i^{\text{th}}$ prime number. The sequence $(p_i)_{i \in \N}$ is nilpotent of index 2. Therefore, if $(x_i)_{i \in \N} \in R,$ then
\begin{equation}
\label{nottorsion}
\left((x_i + p_i)^n\right)_{i \in \N} = \left(x_i^n + np_ix_i^{n - 1}\right)_{i \in \N}
\end{equation}
for all $n \in \N.$ Since $(p_i)_{i \in \N}$ has infinite order, there is no integer $n$ such that $x_i^n + np_ix_i^{n - 1} = x_i^n$ for every $i \in \N.$
\end{example}

\begin{example}
If $p$ is a prime number, $k$ is a positive integer, and $\phi$ is an automorphism of the Galois field $\F_{p^k},$ then the Corbas $(p, k, \phi)$-ring \cite{Corbas} is the ring $R$ in which $R^+ = \F_{p^k} \oplus \F_{p^k}$ and the ring multiplication $\cdot$ is defined by
\[(a, b) \cdot (c, d) = (ac, ad + b\phi(c)).\]
It is straightforward to confirm that $R$ satisfies the following properties:
\begin{itemize}
\item For all $n \in \N,$
\begin{equation}
(a, b)^n = \left(a^n, b\phi(a^{n - 1})\sum_{j = 0}^{n-1} a^j\phi(a^{-j})\right).
\end{equation}
If $\phi$ is not the identity automorphism and $a \neq 0,$ then this can be simplified to
\begin{equation}
(a, b)^n = \left(a^n, b\phi(a^{n - 1})\frac{a^n\phi(a^{-n}) - 1}{a\phi(a^{-1}) - 1}\right),
\end{equation}

\item $\textup{Nil}(R) = 0 \times \F_{p^k}$ has index 2,

\item $R$ is commutative if and only if $\phi$ is the identity.
\end{itemize}
Hence, for each $a, b, y \in \F_{p^k}$ and $n \in \N,$
\begin{equation}
\label{noncommutative}
\left((a, b) + (0, y)\right)^n = \left(a^n, (b + y)\sum_{j = 0}^{n-1} a^j\phi(a^{n - j - 1})\right).
\end{equation}
If $\phi$ is not the identity, then $0$ is the only value of $y$ for which $\left((a, b) + (0, y)\right)^n$ is identically $(a, b)^n.$ In this case, $R$ has no periodic power maps whatsoever.
\end{example}

Unboundedness of order and noncommutativity are responsible for the pathologies of Examples 2.1 and 2.2, respectively. However, as long as $\textup{Nil}(R)$ circumvents these traits, we can promise that each nonzero nilpotent element is a period of some power map. We will refer to such rings as \textit{nilperiod}.

\begin{theorem}
\label{nilpotentsperiodic}
If $R$ is a ring in which every nilpotent element is central and torsion, then $R$ is nilperiod.
\end{theorem}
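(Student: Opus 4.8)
The plan is to fix an arbitrary nonzero nilpotent element $r \in R$ and produce, explicitly, an exponent $n$ for which the power map $f(x) = x^n$ has $r$ as a period. Write $s$ for the index of $r$ (so that $r^s = 0$ but $r^{s - 1} \neq 0$, and $s \geq 2$ since $r \neq 0$) and let $t$ denote the additive order of $r$, which is finite by the torsion hypothesis. Because $r$ is central it commutes with every $x \in R$, so the binomial theorem applies, and for any $n \geq s$ one has
\[(x + r)^n = \sum_{k = 0}^{n} \binom{n}{k} x^{n - k} r^k = x^n + \sum_{k = 1}^{s - 1} \binom{n}{k} x^{n - k} r^k,\]
the top terms vanishing because $r^k = 0$ for $k \geq s$. (Taking $n \geq s$ also sidesteps any appeal to a multiplicative identity, since every surviving summand carries a genuine positive power of both $x$ and $r$.)

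It then suffices to force the remaining sum to vanish identically in $x$, and I would do this by annihilating each coefficient separately, i.e. by arranging $\binom{n}{k} r^k = 0$ for every $1 \leq k \leq s - 1$. Here the torsion hypothesis enters decisively: from $t r = 0$ one gets $t r^k = (t r)\, r^{k - 1} = 0$ for all $k \geq 1$, so it is enough to choose $n$ with $t \mid \binom{n}{k}$ for each $k$ in the range $1 \leq k \leq s - 1$.

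The existence of such an $n$ is the one genuinely arithmetic point, and I expect it to be the main (if minor) obstacle. I would settle it with the explicit value $n = t\,(s - 1)!$. Indeed, for $1 \leq k \leq s - 1$ we have $k! \mid (s - 1)! \mid n$, so writing $\binom{n}{k} = \frac{n}{k!} \prod_{j = 1}^{k - 1}(n - j)$ exhibits the integer factor $\frac{n}{k!} = t \cdot \frac{(s - 1)!}{k!} \in t\Z$; hence $t \mid \binom{n}{k}$, as desired. One also checks that $n = t\,(s - 1)! \geq s$ for every $s \geq 2$ (using $t \geq 2$, which holds since $r \neq 0$), legitimizing the truncation above. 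With this $n$ every summand dies and $(x + r)^n = x^n$ for all $x \in R$, so the nonzero element $r$ is a period of $x^n$. As $r$ was an arbitrary nonzero nilpotent, $R$ is nilperiod.
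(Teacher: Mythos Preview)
Your proof is correct and follows essentially the same route as the paper: both fix a nonzero nilpotent $r$ of index $s$ and additive order $t$, set $n = t\,(s-1)!$, expand $(x+r)^n$ binomially using centrality, and kill the cross terms by showing $t \mid \binom{n}{k}$ for $1 \le k \le s-1$. The only cosmetic difference is that the paper appeals to the known fact that $\binom{n}{k}$ is divisible by $n/\gcd(n,k)$, whereas you extract the factor $n/k!$ directly; your version is arguably more self-contained, and you are also a bit more explicit about why $n \ge s$ (needed in a non-unital ring).
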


\begin{proof}
Let $r \in \textup{Nil}(R),$ and let $i$ and $j$ be the index and order, respectively, of $r.$ If we set $n = (i-1)!j,$ then for all $x \in R,$
\begin{equation}
\label{binomial}
(x + r)^n = x^n + \sum_{k = 1}^{i - 1} \binom{n}{k}r^kx^{n - k}.
\end{equation}
Since the binomial coefficient $\binom{n}{k}$ is a multiple of $n/\textup{gcd}(n, k)$ for each integer $k$ (cf Problem B2 of the 2000 Putnam competition \cite{KPV}) and $n/\textup{gcd}(n,k)$ is a multiple of $j$ for each $k \in [i - 1],$ the sum in (\ref{binomial}) vanishes. Therefore $(x + r)^n = x^n.$
\end{proof}

\begin{corollary}
\label{weaklyperiodicisperiodic}
If $R$ is a weakly periodic ring in which every nilpotent element is central and torsion, then $R$ is a periodic commutative torsion ring.
\end{corollary}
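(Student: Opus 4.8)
The plan is to extract all three conclusions from Theorem~\ref{nilpotentsperiodic} together with the fact, cited as \cite{Jacobson}, that every $J$-ring is commutative.

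\emph{Periodicity.} Fix $x \in R$ and use weak periodicity to write $x = a + b$ with $a \in \textup{Nil}(R)$ and $b \in \textup{Pot}(R)$. By hypothesis $a$ is central and torsion, so Theorem~\ref{nilpotentsperiodic} applies to $r = a$: with $n_0 = (i-1)!\,j$, where $i,j$ are the index and order of $a$, one has $(t + a)^{n_0} = t^{n_0}$ for every $t \in R$. Taking $t = b$ and using commutativity of addition gives $x^{n_0} = (a + b)^{n_0} = b^{n_0}$. Since $b$ is potent, $\{b^s : s \geq 1\}$ is finite, hence so is $\{x^{sn_0} : s \geq 1\} = \{(b^{n_0})^s : s \geq 1\}$. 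Writing a general exponent as $N = sn_0 + t$ with $s \geq 1$ and $0 \leq t < n_0$ gives $x^N = x^{sn_0}x^{t}$, a product drawn from two finite sets, so $\{x^N : N \geq 1\}$ is finite and $R$ is periodic. This is precisely the step where Theorem~\ref{nilpotentsperiodic} supplants the Pierce-decomposition and Chacron-criterion machinery of \cite{BT}.

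\emph{The central ideal $N$ and the quotient.} Put $N = \textup{Nil}(R)$. Because every nilpotent is central, sums of nilpotents are nilpotent and $rs$ is nilpotent whenever $s$ is, so $N$ is a two-sided (indeed central) ideal and $R/N$ is reduced. Weak periodicity makes every coset the image of a potent element, and potency descends to the quotient, so $R/N = \textup{Pot}(R/N)$ is a $J$-ring; by \cite{Jacobson} it is commutative, whence $[R,R] \subseteq N \subseteq Z(R)$. For torsion I would first check that every $J$-ring is additively torsion: given $w$ with $w^m = w$, the element $2w$ is again potent, say $(2w)^{m'} = 2w$, and choosing $K = 1 + (m-1)(m'-1)$ gives both $(2w)^K = 2w$ and $w^K = w$, hence $(2^K - 2)w = 0$. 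Applying this in the $J$-ring $R/N$ shows $R/N$ is torsion; since $N$ is torsion by hypothesis, $n\bar x = 0$ forces $nx \in N$ to be torsion, so $x$ is torsion and $R$ is a torsion ring.

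\emph{Commutativity, and the main obstacle.} It remains to promote $[R,R] \subseteq Z(R)$ to $[R,R] = 0$. Writing $x = a + b$ and $y = c + d$ with $a,c$ central nilpotent, a direct expansion collapses $[x,y]$ to $[b,d]$, so it suffices to show that potent elements commute. Since commutators are central, the bi-derivation identity $[b^{m},y] = m\,b^{m-1}[b,y]$ combined with $b^{m} = b$ yields $(m-1)[b,y] = 0$ and shows that $b^{m-1}$ is a central idempotent; the Peirce decomposition along these idempotents lets me pass to a unital corner $eR$ in which $b$ and $d$ are units of finite multiplicative order while $[b,d]$ persists. The decisive use of the hypothesis is that the multiplicative commutator $bdb^{-1}d^{-1} = e + z$ has $z$ nilpotent, hence central, so $e + z$ is a central unipotent of finite order. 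The hard part is to conclude $z = 0$: the torsion relations alone (e.g. $(m-1)z = 0$) do not suffice, and I expect to finish either by invoking the classical theorem that a periodic ring with central nilpotents is commutative, or by decomposing $R$ prime-by-prime into finite local pieces, in which a central unipotent of finite order is forced to be trivial. Either route gives $[b,d] = 0$, and assembling the three parts shows that $R$ is a periodic commutative torsion ring.
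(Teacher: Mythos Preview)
Your periodicity and torsion arguments are correct and match the paper's approach closely; your explicit verification that $J$-rings are additively torsion is a nice touch (the paper just cites \cite{Jacobson} for this). The paper's periodicity step is slightly slicker---it observes directly that $(x+y)^{nm} = x^{nm} = x^n = (x+y)^n$ once $x^m = x$---but yours is equivalent.

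The real divergence is in commutativity. The paper dispatches this in one line: having shown $R$ is periodic, it cites Herstein's theorem \cite{Herstein1} that a periodic ring with central nilpotents is commutative. You instead attempt a self-contained argument via commutator identities, Peirce corners, and central unipotents, and then acknowledge that you cannot close the gap (``the hard part is to conclude $z=0$'') without either invoking that same classical theorem or a prime-by-prime decomposition you do not carry out. So your commutativity section, as written, is not a proof but an exploration that circles back to exactly the citation the paper uses. If you take your own first fallback and simply invoke Herstein after establishing periodicity, your proof is complete and essentially identical to the paper's; the intervening commutator analysis can be deleted.
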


\begin{proof}
Let $x \in \textup{Pot}(R)$ and $y \in \textup{Nil}(R)$ be given. By Theorem \ref{nilpotentsperiodic}, there is an integer $n$ such that $(x + y)^n = x^n.$ Since there is also an integer $m > 1$ such that $x^m = x,$ we can see that
\begin{equation}
(x + y)^{nm} = x^{nm} = x^n = (x + y)^n.
\end{equation}
Hence $R$ is periodic, and because all nilpotent elements are central, we can further conclude that $R$ is commutative (see \cite{Herstein1}). Consequently, $\textup{Nil}(R)$ is an ideal.

To see that $R$ is torsion, first notice that $R/\textup{Nil}(R)$ is a $J$-ring. As Jacobson explained in the proof of his classic ``$a^n = a$'' theorem \cite{Jacobson}, the additive group of a $J$-ring is torsion. This implies that for each $x \in \textup{Pot}(R),$ there is a positive integer $j$ such that $jx \in \textup{Nil}(R).$ Since every nilpotent element of $R$ is  torsion, it follows that $R$ itself is torsion as a whole.
\end{proof}

It may be interesting to figure out the extent to which the conditions of Theorem \ref{nilpotentsperiodic} can be loosened. Here is an example demonstrating that either of the hypotheses in Theorem \ref{nilpotentsperiodic} can be resoundingly defied.

\begin{example}
\label{hypothesisrejection}
Consider the direct product $R = R_1 \times R_2,$ where $R_1$ is a reduced ring and $R_2$ is a nil ring of finite index, say $n.$ Then $\textup{Nil}(R) = 0 \times R_2$ has index $n$ as well. Furthermore,
\begin{equation}
\left((a, b) + (0, y)\right)^m = (a^m, 0) = (a, b)^m
\end{equation}
for all $a \in R_1,$ $b, y \in R_2,$ and integers $m \geq n.$
\end{example}

Take $R_2$ to be a torsion-free abelian group equipped with the zero multiplication to see that nilperiod rings can be entirely devoid of torsion nilpotent elements. Further still, the existence of noncommutative nil rings rules out the necessity of central nilpotent elements. However, nilperiod rings seem constrained enough to compel $\textup{Nil}(R)$ to be an ideal. We call $R$ an \textit{NI-ring} if $\textup{Nil}(R)$ is an ideal. Equivalently, $\text{Nil}(R)$ coincides with the upper nilradical $\text{Nil}^*(R)$ (i.e. the sum of all nil ideals of $R$).

\begin{conjecture}
\label{nilperiodideal}
Every nilperiod ring is an \textup{NI}-ring.
\end{conjecture}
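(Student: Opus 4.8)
The plan is to establish the two defining features of an ideal separately: first that $\textup{Nil}(R)$ is closed under addition, and then that it absorbs multiplication by arbitrary elements of $R$. I expect the first to be essentially routine given the period machinery already in place, and the second to be the genuine difficulty.

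For the additive closure, the key observation is a monotonicity property of the period subgroups. Writing $f_n$ for the power map $x \mapsto x^n$, I first note that if $h \in \textup{Per}(f_n)$, then for every positive integer $k$ and every $x \in R$,
\begin{equation}
(x + h)^{kn} = \left((x + h)^n\right)^k = (x^n)^k = x^{kn},
\end{equation}
so that $\textup{Per}(f_n) \subseteq \textup{Per}(f_{kn})$. Now, given nonzero $r, s \in \textup{Nil}(R)$, the nilperiod hypothesis furnishes $n, m$ with $r \in \textup{Per}(f_n)$ and $s \in \textup{Per}(f_m)$; both then lie in the single subgroup $\textup{Per}(f_{nm})$, whence $r + s \in \textup{Per}(f_{nm}) \subseteq \textup{Nil}(R)$ (the last inclusion by evaluating the period identity at $x = 0$). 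Since $\textup{Nil}(R)$ is trivially closed under negation, this shows that $\textup{Nil}(R)$ is an additive subgroup of $R^{+}$.

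It then remains to prove the absorption property, and here I would first reduce the two-sided condition to a one-sided one: because $(ab)^{k+1} = a(ba)^k b$, the product $ab$ is nilpotent if and only if $ba$ is, so it suffices to show that $ar \in \textup{Nil}(R)$ for every $r \in \textup{Nil}(R)$ and every $a \in R$. The natural strategy is to try to promote $ar$ to a period of some power map: starting from the identity $(x + r)^n = x^n$, valid for all $x \in R$, one would substitute cleverly chosen expressions for $x$ — for instance $x \mapsto xa$, $x \mapsto ax$, or iterated variants — in the hope of manufacturing an identity of the form $(x + ar)^m = x^m$, which would place $ar$ in $\textup{Per}(f_m) \subseteq \textup{Nil}(R)$.

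The hard part will be exactly this absorption step. The relation $(x + r)^n = x^n$ is a generalized polynomial identity in the single variable $x$ carrying the fixed nilpotent $r$ as a coefficient threaded through its monomials, and without commutativity there is no evident way to collapse the mixed words $r x^{i_1} r x^{i_2}\cdots$ that obstruct isolating $ar$. The equivalent reformulations of the goal — that the two-sided ideal generated by each $r \in \textup{Nil}(R)$ is nil (so that $\textup{Nil}(R) = \textup{Nil}^{*}(R)$), or that $R/\textup{Nil}^{*}(R)$ is reduced — recast the problem but do not obviously dissolve this combinatorial core, and transferring the nilperiod hypothesis to such a quotient is itself delicate, since nilpotents of $R/\textup{Nil}^{*}(R)$ need not lift to nilpotents of $R$. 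I therefore expect that closing the gap between additive closure and full absorption will require a genuinely new input beyond the period monotonicity used above.
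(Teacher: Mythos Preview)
The statement you are attempting to prove is a \emph{conjecture}, not a theorem: the paper does not provide a proof of it in general, and indeed presents it as an open problem. What the paper does prove is the special case (Theorem~\ref{PIalgebra}) where $R$ is a PI-algebra over a field of characteristic zero.

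Your additive-closure argument is correct and is exactly the opening move of the paper's proof of that special case: the paper observes that if $(x+a)^m = x^m$ and $(x+b)^\ell = x^\ell$ identically, then $(a+b)^{\ell m} = 0$. Your monotonicity formulation $\textup{Per}(f_n) \subseteq \textup{Per}(f_{kn})$ packages this slightly more cleanly, but the content is identical.

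Where you correctly identify the genuine gap --- the absorption step --- the paper has no general argument either. In the PI-algebra setting the paper's proof of Theorem~\ref{PIalgebra} does not proceed by your proposed strategy of substituting into the period identity to manufacture $(x+ar)^m = x^m$. Instead it passes to the finitely generated subalgebra $S$ generated by $c$ and $r$, invokes Amitsur's Nullstellensatz to ensure $\textup{J}(S)$ is nil, and then argues by contradiction via the subdirect-product structure of $S/\textup{J}(S)$, Jacobson density, and Kaplansky's theorem on PI-algebras to force $c \in \textup{J}(S)$. This chain of structural results is what replaces the direct combinatorial manipulation you were hoping for, and each link genuinely uses the PI hypothesis or characteristic zero. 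Your instinct that ``a genuinely new input'' is needed for the general case is therefore well founded: the conjecture remains open, and your proposal should be read not as a flawed proof but as a correct partial result (additive closure) together with an honest assessment of the obstruction.
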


If Conjecture \ref{nilperiodideal} is correct, then noncommutative weakly periodic nilperiod rings are ``almost'' commutative in the sense that their commutator ideals are nil. Indeed, for if $R$ is is a weakly periodic \textup{NI}-ring, then $R/\textup{Nil}(R)$ is commutative due to being a $J$-ring. Moreover, because the Jacobson radical $\text{J}(R)$ of a weakly periodic ring is nil, we could report that $\textup{J}(R) = \text{Nil}(R) = \text{Nil}^*(R)$ and that $R$ is ultimately periodic \cite[Lemma 1]{GTY}.

Of course in noncommutative rings, $\text{Nil}^*(R)$ typically differs from the lower nilradical $\text{Nil}_*(R),$ that is, the intersection of all the prime ideals of $R.$ A ring $R$ is called \textit{2-primal} if $\text{Nil}_*(R) = \text{Nil}(R).$ Marks \cite{Marks} provided a thorough list of conditions on noncommutative rings that enforce 2-primality together with their interdependencies, but none involve weak periodicity. Furthermore, being 2-primal is a necessary and sufficient condition for a ring with bounded nilpotency index to be an NI-ring \cite[Proposition 1.4]{HJL}. We should therefore suspect that a weakly periodic NI-ring may fail to be 2-primal provided its nilpotency index is unbounded.

\begin{example}
Let $S$ be a finite 2-primal ring, $n \in \N,$ and $R_n$ be the $2^n \times 2^n$ upper triangular matrix ring over $S.$ Each $R_n$ is finite and thus periodic. Proposition 2.5 of \cite{BHL} further implies that each $R_n$ is 2-primal, and so they are NI-rings. Now embed each $R_n$ into $R_{n + 1}$ via the monomorphism $\sigma$ defined by
\[\sigma(A) = \begin{pmatrix} A & 0 \\ 0 & A \end{pmatrix}.\]
Then $\mathcal{D} = \langle R_n, \sigma_{nm}\rangle,$ with $\sigma_{nm} = \sigma^{m - n}$ whenever $n \leq m,$ is a direct system over $\N.$ Set $R = \varinjlim R_n,$ the direct limit of $\mathcal{D}.$ Since $R = \bigcup_{n = 1}^{\infty} R_n$ is the union of periodic rings, $R$ is itself periodic. However, \cite[Proposition 1.1 and Example 1.2]{HJL} explains why $R$ fails to be 2-primal, even though the direct limit of a direct system of NI-rings is itself an NI-ring.
\end{example}

We can at least verify Conjecture \ref{nilperiodideal} for polynomial identity algebras over fields of characteristic zero. The proof adapts techniques employed by Herstein in \cite{Herstein2} and \cite{Herstein3}.

\begin{theorem}
\label{PIalgebra}
If $R$ is a nilperiod \textup{PI}-algebra over a field $K$ of characteristic zero, then $R$ is an \textup{NI}-ring.
\end{theorem}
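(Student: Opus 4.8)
The plan is to prove the formally stronger statement that $R$ is \emph{2-primal}, i.e. that $\mathrm{Nil}(R)$ is contained in every prime ideal of $R$. Since the prime radical $\mathrm{Nil}_*(R)$ is an ideal and always satisfies $\mathrm{Nil}_*(R) \subseteq \mathrm{Nil}(R)$, this forces $\mathrm{Nil}(R) = \mathrm{Nil}_*(R)$ and hence that $R$ is an NI-ring. (This is consistent with the earlier caveat about unbounded index, since a PI-algebra has bounded nilpotency index, so here 2-primality and the NI property coincide.) Following Herstein's strategy of testing a power-map relation against the prime quotients of a PI-ring, I would argue by contradiction: suppose some prime ideal $\mathfrak{p}$ fails to contain a nilpotent element $a$. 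Then $a \neq 0$, so the nilperiod hypothesis furnishes a positive integer $n$ with $(x + a)^n = x^n$ for every $x \in R$. Reducing modulo $\mathfrak{p}$ gives, in the prime PI-algebra $S := R/\mathfrak{p}$ over $K$, a nonzero element $\alpha$ (the image of $a$) satisfying $(y + \alpha)^n = y^n$ for all $y \in S$.

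The first key step is to peel off the part of this relation that is linear in $\alpha$. Since $K$ is an infinite field lying in the center of $S$, I would substitute $y \mapsto \lambda y$ for $\lambda \in K$ and expand, obtaining $(\lambda y + \alpha)^n - (\lambda y)^n = \sum_{j=1}^{n} \lambda^{\,n-j} W_j(y)$, where $W_j(y)$ collects the words in $y$ and $\alpha$ containing exactly $j$ factors $\alpha$. The left side vanishes for every $\lambda \in K$, so by a Vandermonde argument each coefficient $W_j(y)$ must vanish; in particular the linear term yields the generalized identity $\sum_{i=0}^{n-1} y^i \alpha\, y^{\,n-1-i} = 0$ for all $y \in S$.

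The second key step transports this identity to a matrix algebra, where it is trivially contradicted. By Posner's theorem the prime PI-algebra $S$ embeds in its central localization $\Delta$, a central simple algebra of finite dimension over the field $F := \mathrm{Frac}(Z(S))$; because every element of $\Delta$ has a single central denominator, the central scalar pulls out of each summand and the identity $\sum_{i} y^i \alpha\, y^{\,n-1-i} = 0$ persists on all of $\Delta$. Extending scalars to an algebraic closure then gives $\Delta \otimes_F \overline{F} \cong M_N(\overline{F})$, and since $\mathrm{char}\,F = 0$ makes $F$ infinite, the $F$-rational points $\Delta \otimes 1$ are Zariski dense; a polynomial map vanishing on them vanishes identically, so the identity holds throughout $M_N(\overline{F})$. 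Evaluating at $y = I$ gives $n\alpha = 0$, and as $\mathrm{char}\,K = 0$ makes $n$ invertible this forces $\alpha = 0$, i.e. $a \in \mathfrak{p}$, a contradiction. Hence $R$ is 2-primal, and in particular an NI-ring.

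I expect the main obstacle to be this second step: one must verify carefully that the generalized polynomial identity survives central localization and the scalar extension to $M_N(\overline{F})$, and that evaluation at the identity element is legitimate even though $S$ (and $R$) need not be unital. Posner's theorem together with the density of $F$-rational points is exactly what licenses passing to $M_N(\overline{F})$, where the identity element lives. It is also here that characteristic zero is used three times over: to guarantee that $K$ and $F$ are infinite, which underwrites both the $\lambda$-scaling and the density argument, and to guarantee that the integer $n$ is invertible in the final contradiction.
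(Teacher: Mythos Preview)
Your proof is correct and takes a genuinely different route from the paper's. The paper argues in two stages: first it shows $\mathrm{Nil}(R)$ is closed under addition directly from the nilperiod identity (if $(x+a)^m=x^m$ and $(x+b)^\ell=x^\ell$ then $(a+b)^{\ell m}=0$), and then for absorption it passes to the finitely generated subalgebra $S=K\langle c,r\rangle$, invokes Amitsur's Nullstellensatz to make $J(S)$ nil, assumes $c\notin J(S)$, decomposes the semiprimitive quotient $S/J(S)$ as a subdirect product of primitive PI-rings, identifies each factor with a matrix ring $\mathcal{M}_{s_i}(D_i)$ via Kaplansky, and evaluates the inherited identity at $I_{s_i}$ to force the image of $c$ to be torsion --- which characteristic zero then contradicts. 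You instead work globally with an arbitrary prime ideal and Posner's theorem, reaching a central simple algebra in a single reduction, and you obtain the formally stronger 2-primal conclusion rather than merely NI. Your Vandermonde extraction of the degree-one piece $\sum_{i} y^{i}\alpha\, y^{\,n-1-i}=0$ is exactly the right manoeuvre: it produces an identity homogeneous in $y$, which is what lets it survive central localization to $\Delta$; the paper never needs this step because its matrix rings arise as \emph{homomorphic images} rather than localizations, so the identity matrix is already available for substitution. One simplification you might note: once the homogeneous identity holds throughout $\Delta$ you can evaluate at $y=1_\Delta$ immediately to get $n\alpha=0$ in $\Delta$, and since $K$ embeds in $Z(\Delta)$ this already forces $\alpha=0$; the further passage to $M_N(\overline{F})$ via Zariski density is correct but not needed.
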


\begin{proof}
Let $a, b \in \textup{Nil}(R).$ Then there are positive integers $\ell$ and $m$ such that $(x + a)^m = x^m$ and $(x + b)^{\ell} = x^{\ell}$ for all $x \in R.$ Consequently, $(a + b)^{\ell m} = 0,$ and so $a + b$ is nilpotent.

Next, suppose $c \in \textup{Nil}(R),$ and let $r \in R$ be arbitrary. Let $S$ be the subalgebra of $R$ generated by $c$ and $r.$ Since $S$ is a finitely-generated PI-algebra, $\textup{J}(S)$ is nil due to Amitsur's Nullstellensatz \cite{Amitsur}, \cite{Braun}.

Now assume, to the contrary, that $c \not\in \textup{J}(S).$ Then the coset $\overline{c} = (c + \textup{J}(S)) \in S/\textup{J}(S)$ is a nonzero nilpotent element of index, say, $j.$ Because $S/\textup{J}(S),$ as a semiprimitve ring, is a subdirect product of primitive rings $S_i,$ each of which is a homomorphic image of $S/\textup{J}(S),$ the coset $\overline{c}$ projects to a nilpotent element $\nu_i$ within each factor $S_i.$ Note that not all of these $\nu_i$ can be 0, otherwise $\overline{c} = 0,$ which would indicate that $c \in \textup{J}(S).$

Let $S_i$ be a factor in which $\nu_i \neq 0,$ and suppose that $\nu_i$ is nilpotent of index $j_i.$ Then $2 \leq j_i \leq j,$ the power $\nu_i^{j_i - 1}$ is nilpotent of index 2, and $\overline{c^{j_i - 1}} \neq 0,$ which implies that $c^{j_i - 1} \not\in \textup{J}(S).$ But since $c^{j_i - 1}$ is then a nonzero nilpotent element of $R,$ by hypothesis $c^{j_i - 1}$ is the period of some power map $f(x) = x^{n_i}.$ As a result, the factor $S_i$ inherits the identity $(x + \nu_i^{j_i - 1})^{n_i} = x^{n_i}.$ Furthermore, $S_i$ cannot be a division ring as it contains the nonzero nilpotent $\nu_i.$ Due to Jacobson's density theorem, $S_i$ is thus isomorphic to a dense subring of $\textup{End}(V_i)$ for some vector space $V_i$ over a division ring $D_i.$ Since $S_i$ is itself a PI-algebra, we may assume that it is finite-dimensional (cf \cite[Lemma 5 and Theorem 1]{Kaplansky}), and so $S_i \cong \mathcal{M}_{s_i}(D_i),$ the ring of $s_i \times s_i$ matrices with entries in $D_i,$ for some integer $s_i \geq 2.$ So if $I_{s_i} \in S_i$ corresponds to the $s_i \times s_i$ identity matrix, then
\begin{equation}
\left(I_{s_i} + \nu_i^{j_i - 1}\right)^{n_i} = I_{s_i} = I_{s_i} + n_i\nu_i^{j_i - 1},
\end{equation}
which implies that $\nu_i^{j_i - 1}$ is torsion.

Because every $j_i$ must lie between 2 and $j,$ we need only finitely many different values for the $n_i.$ Hence, $\overline{c}$ is torsion. Therefore $kc \in \textup{J}(S)$ for some integer $k \geq 2.$ Yet since $K$ is a field of characteristic zero and $\textup{J}(S)$ is closed under scalar multiplication, we find that
\begin{equation}
c = (k1_{K})^{-1}kc \in \textup{J}(S),
\end{equation}
which is at odds with our original assumption. It thus follows that $c \in \textup{J}(S)$ and so $cr$ and $rc$ are in $\textup{J}(S)$ as well. Since $\textup{J}(S)$ is nil, both $cr$ and $rc$ are nilpotent, as required.
\end{proof}

\begin{corollary}
If $R$ is a periodic nilperiod algebra over a field of characteristic zero with finite $\mu_1$ and $\mu_0,$ then $R$ is an \textup{NI}-algebra.
\end{corollary}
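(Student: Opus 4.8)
The plan is to reduce the corollary directly to Theorem~\ref{PIalgebra} by showing that the finiteness of $\mu_1$ and $\mu_0$ upgrades $R$ into a PI-algebra, after which every hypothesis of Theorem~\ref{PIalgebra} is met. First I would extract the uniform exponential identity hidden in the assumptions. By the definition of $\mu_1,$ each $x \in R$ admits an onset $m(x)$ with $x^{m(x) + \mu_1} = x^{m(x)},$ and by the definition of $\mu_0$ this onset may be chosen so that $m(x) \le \mu_0.$ Multiplying both sides by $x^{\mu_0 - m(x)}$ then yields
\[
x^{\mu_0 + \mu_1} = x^{\mu_0} \qquad \text{for every } x \in R.
\]

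Next I would observe that $t^{\mu_0 + \mu_1} - t^{\mu_0}$ is a nonzero element of the free $K$-algebra, the two exponents being distinct because $\mu_1 \ge 1$ for any nontrivial periodic ring. The display above therefore exhibits a genuine single-variable polynomial identity satisfied throughout $R,$ so $R$ is a PI-algebra over $K.$ Since $R$ is nilperiod by hypothesis and $K$ has characteristic zero, Theorem~\ref{PIalgebra} applies verbatim and forces $\textup{Nil}(R)$ to be an ideal; that is, $R$ is an \textup{NI}-algebra.

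The argument is essentially bookkeeping, so I do not anticipate a substantive obstacle, only two checks warranting care. The first is confirming that the per-element onset $m(x)$ can genuinely be absorbed into the global bound $\mu_0$ uniformly across $R$; this is precisely where the finiteness of $\mu_0,$ rather than mere periodicity, is used. The second is verifying that the governing polynomial is not identically zero, which would vacate the PI hypothesis; this follows at once from $\mu_1 \ge 1.$ With the identity secured, the entire structure theory marshaled in the proof of Theorem~\ref{PIalgebra}---Amitsur's Nullstellensatz and Kaplansky's theorem on primitive PI-algebras---is already available, so nothing beyond this reduction remains to be established.
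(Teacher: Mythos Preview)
Your proposal is correct and follows essentially the same route as the paper: establish the uniform identity $x^{\mu_0 + \mu_1} = x^{\mu_0}$ to see that $R$ is a PI-algebra, then invoke Theorem~\ref{PIalgebra}. The paper's proof is simply a one-line version of yours, omitting the bookkeeping checks (uniformity of the onset and nonvanishing of the polynomial) that you spell out explicitly.
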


\begin{proof}
Since $x^{\mu_0 + \mu_1} = x^{\mu_0}$ for all $x \in R,$ it follows that $R$ is a PI-algebra, and so Theorem \ref{PIalgebra} applies.
\end{proof}

\section{The Number of Distinct Periodic Power Maps}

We now restrict our attention to periodic rings with finite $\mu_1$ and $\mu_0,$ for which there are $\mu_0 + \mu_1 - 1$ distinct power maps over $R$\footnote{The only exception to this is the zero ring. This is because $\mu_0, \mu_1 \geq 1,$ but the zero ring only has one mapping on it.}.  Let $\mu_{P}: =\mu_{P}(R)$ denote the number of distinct periodic power maps over $R.$ Pinpointing the precise ring properties that determine the value of $\mu_P$ is currently beyond our grasp. However, a lower bound for $\mu_P$ is quite tenable, provided that $R$ is commutative and $\text{Nil}(R)^+$ is a torsion group with finite exponent. In essence, nilpotent elements of simultaneously prime order and index 2 are the ``fundamental'' periods of the easiest-to-distinguish periodic power maps. First, we borrow some notation from number theory. The squarefree radical of a positive integer $n,$ denoted $\textup{rad}(n),$ is the product of the distinct prime factors of $n,$ and $\omega(n)$ is the number of distinct prime factors of $n.$ Despite the threat of notational confusion, we will let $\mu(n)$ be the M\"{o}bius function (because one can never get enough of the Greek letter mu).

\begin{theorem}
\label{periodicpowermapbound}
Let $R$ be a commutative ring in which the exponent of $\textup{Nil}(R)^+$ is finite. If $N$ is the least common multiple of the orders of the nilpotent elements, then $ f(x) = x^n$ is periodic for every integer $n \in \N$ that is not coprime with $N.$ Accordingly, 
\[\mu_P \geq -\sum_{\substack{d\, |\, N \\ d \neq 1}} \mu(d)\left\lfloor\frac{\mu_0 + \mu_1 - 1}{d}\right\rfloor = \sum_{\substack{d\, |\, \textup{rad}(N) \\ d \neq 1}} (-1)^{\omega(d) + 1} \left\lfloor\frac{\mu_0 + \mu_1 - 1}{d}\right\rfloor\]
if $\mu_1$ and $\mu_0$ are finite. This lower bound is attained if $R$ is also finite and unital.
\end{theorem}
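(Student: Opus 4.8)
The statement bundles three claims: that $x^n$ is periodic whenever $\gcd(n,N) > 1$, the resulting lower bound on $\mu_P$, and its sharpness for finite commutative unital rings. The plan is to treat them in that order, the first being a mild sharpening of Theorem \ref{nilpotentsperiodic} in which the exponent $n$ is prescribed rather than chosen. For it, I would manufacture an explicit nonzero nilpotent period of index $2$. Fix a prime $p$ dividing $\gcd(n,N)$. Since $p \mid N$ and $N$ is the least common multiple of the orders of the nilpotent elements, some $s \in \textup{Nil}(R)$ has additive order $m$ with $p \mid m$; then $s' = (m/p)s$ is a nonzero nilpotent of additive order exactly $p$. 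Taking $r = (s')^{i-1}$, where $i \geq 2$ is the nilpotency index of $s'$, yields a nonzero element with $r^2 = 0$ and $pr = 0$, hence $nr = 0$ because $p \mid n$. Commutativity then collapses the binomial expansion, since $r^2 = 0$ kills every term of degree $\geq 2$ in $r$:
\[(x + r)^n = x^n + nr\,x^{n-1} = x^n\]
for all $x \in R$. Thus $r \in \text{Per}(f)\setminus\{0\}$ and $f(x) = x^n$ is periodic.

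For the lower bound, I would recall that finiteness of $\mu_1$ and $\mu_0$ makes the maps $x, x^2, \ldots, x^{\mu_0 + \mu_1 - 1}$ pairwise distinct and exhaustive among power maps over $R$. By the first claim, each exponent $n$ in this range with $\gcd(n,N) > 1$ contributes a distinct periodic map, so $\mu_P$ is bounded below by the number of such $n$. Converting this count by inclusion--exclusion over the prime divisors of $N$ --- equivalently, over the squarefree divisors $d \neq 1$ of $\textup{rad}(N)$, each contributing $(-1)^{\omega(d)+1}\lfloor(\mu_0 + \mu_1 - 1)/d\rfloor$, which matches $-\mu(d)\lfloor(\mu_0 + \mu_1 - 1)/d\rfloor$ --- yields exactly the two (manifestly equal) expressions in the statement. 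Here $N$ is finite because it is precisely the exponent of $\textup{Nil}(R)^+$.

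The hard part, and the crux of the attainment claim, will be the converse: over a finite commutative unital ring, $\gcd(n,N) = 1$ should force $x^n$ to be nonperiodic. The plan is to decompose $R = \prod_\ell R_\ell$ into finite local rings and argue componentwise. A hypothetical nonzero period $r = (r_\ell)$ has some $r_{\ell_0} \neq 0$ inside a factor $R_{\ell_0}$ whose residue field has characteristic $p$; since $R_{\ell_0}^+$ is a $p$-group, the maximal ideal $\mathfrak{m}_{\ell_0}$ (which contains the nonzero $r_{\ell_0}$) has $p$-power additive exponent, so $p \mid N$ and therefore $\gcd(n,p) = 1$. Evaluating the identity $(x + r_{\ell_0})^n = x^n$ at $x = 1$ gives $(1 + r_{\ell_0})^n = 1$, but $1 + r_{\ell_0}$ lies in the finite $p$-group $1 + \mathfrak{m}_{\ell_0}$, so its order divides both $n$ and a power of $p$, forcing $1 + r_{\ell_0} = 1$ and contradicting $r_{\ell_0} \neq 0$.

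Consequently, in the range $[1, \mu_0 + \mu_1 - 1]$ the periodic power maps are exactly those with $\gcd(n,N) > 1$, the count is exact, and the lower bound is attained. I expect this last step to carry the whole difficulty, as the periodicity direction is a clean binomial argument whereas the converse must rule out ``accidental'' nilpotent periods arising from the additive--multiplicative interplay. The two hypotheses do exactly the needed work: unitality is what lets me test the identity at $x = 1$ and invoke the local-product structure theorem, while finiteness is what turns $1 + \mathfrak{m}_{\ell_0}$ into a $p$-group whose element orders I can control.
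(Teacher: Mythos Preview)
Your proof is correct and follows essentially the same architecture as the paper's: construct a nonzero nilpotent of index $2$ and additive order $p$ to witness periodicity of $x^{pm}$, count via inclusion--exclusion, and for sharpness decompose $R$ into finite local factors and test the period identity at $x=1$. The only noticeable difference is in the endgame of the converse: where the paper expands $(1_{R_i}+\pi_is)^n$ binomially, factors out $\pi_is$, and uses locality plus B\'ezout to force $n1_{R_i}$ to be a non-unit, you instead observe directly that $1+r_{\ell_0}$ lies in the $p$-group $1+\mathfrak{m}_{\ell_0}$ and hence has order dividing $\gcd(n,p^\infty)=1$---a slightly cleaner packaging of the same local obstruction.
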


\begin{proof}
By Cauchy's theorem for abelian groups, if $p$ is a prime number that divides $N,$ then $\textup{Nil}(R)$ contains a necessarily nonzero element $r$ of order $p.$ If the index of $r$ is $k,$ then the index of $r^{k - 1}$ is 2, and since $r\, |\, r^{k - 1},$ the order of $r^{k - 1}$ is also $p.$ We can thus invoke the freshman's dream to see that
\begin{equation}
(x + r^{k - 1})^{pm} = (x^p + r^{(k - 1)p})^m = x^{pm} 
\end{equation}
for every $x \in R$ and $m \in \N.$

So if $pm \leq \mu_0 + \mu_1 - 1,$ then $f(x) = x^{pm}$ is a nonrepetitive periodic power map over $R.$ We can enumerate all these maps in two different ways. One is a routine application of the principle of inclusion-exclusion. The other way is to note that the number of integers no larger than $\mu_0 + \mu_1 - 1$ which are coprime to $N$ is given by
\begin{align}
\sum_{\substack{1 \leq n \leq \mu_0 + \mu_1 - 1 \\ \gcd(n, N) = 1}} 1 &= \sum_{n = 1}^{\mu_0 + \mu_1 - 1} \sum_{d\, | \gcd(n, N)} \mu(d)
\\ &= \sum_{d\, |\, N} \mu(d) \sum_{\substack{1 \leq n \leq \mu_0 + \mu_1 - 1 \\ d\, |\, n}} 1
\\ &= \sum_{d\, |\, N} \mu(d)\left\lfloor\frac{\mu_0 + \mu_1 - 1}{d}\right\rfloor = \mu_0 + \mu_1 - 1 + \sum_{\substack{d\, |\, N \\ d \neq 1}} \mu(d)\left\lfloor\frac{\mu_0 + \mu_1 - 1}{d}\right\rfloor
\\ &= \mu_0 + \mu_1 - 1 + \sum_{\substack{d\, |\, \text{rad}(N) \\ d \neq 1}} (-1)^{\omega(d)}\left\lfloor\frac{\mu_0 + \mu_1 - 1}{d}\right\rfloor.
\end{align}

Lastly, let us see that these are the only periodic power maps if $R$ is a finite commutative ring with unity. Since finite rings are trivially Artinian, $R \cong \prod_i R_i,$ where the $R_i$ are finite commutative local rings, and for each $i$ we have a natural surjective homomorphism $\pi_i: R \rightarrow R_i.$ Now suppose that $g(x) = x^n$ is periodic and $s \in \text{Per}(g)\backslash\{0\}.$ Then $\pi_is \neq 0_{R_i}$ for some $i.$ Furthermore,
\begin{equation}
(1_{R_i} + \pi_is)^n = 1_{R_i} = 1_{R_i} + \sum_{k = 1}^{n} \binom{n}{k}\pi_is^k,
\end{equation}
and so
\begin{equation}
\sum_{k = 1}^{n} \binom{n}{k}\pi_is^k = \pi_is\left(n1_{R_i} + \sum_{k = 2}^{n} \binom{n}{k}\pi_is^{k - 1}\right) = 0_{R_i}.
\end{equation}
Therefore $n1_{R_i} + \sum_{k = 2}^{n} \binom{n}{k}\pi_is^{k - 1} = 0_{R_i}$ or is a zero divisor. Either way, $n1_{R_i}$ must be a non-unit due to the locality of $R_i.$ It follows that $n$ is not coprime to $\text{char}(R_i),$ otherwise B\'{e}zout's identity could be used to express the unit $1_{R_i}$ as a $\Z$-linear combination of the non-units $n1_{R_i}$ and $\text{char}(R_i)1_{R_i}.$ However, $\text{char}(R_i)$ must be a prime power $p^t,$ and so $p$ is a common divisor of $n$ and $\text{char}(R_i).$ Since the order of $s$ is a multiple of $\text{char}(R_i),$ we conclude that $n$ and the order of $s$ are not coprime.
\end{proof}

Observe that $\mu_0 = 1$ for $J$-rings and $\mu_1 = 1$ for nil rings. This quickly leads to the two following corollaries.

\begin{corollary}
\label{jring}
Let $R$ be a $J$-ring with finite $\mu_1.$ Then $R$ has $\mu_1$ distinct power maps, none of which are periodic.
\end{corollary}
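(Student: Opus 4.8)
The plan is to read the cardinality straight off the observation $\mu_0 = 1$ recorded just above the corollary, and then to rule out periodicity by showing that a $J$-ring is reduced. Since every element of a $J$-ring satisfies $x^m = x$ for some $m > 1,$ each power sequence $x, x^2, x^3, \dots$ is purely periodic, so $R$ is periodic and the onset of periodicity for every element is $1$; this is precisely the statement $\mu_0 = 1.$ Because $\mu_1$ is assumed finite, I can invoke the fact stated at the opening of Section 3, namely that a (nonzero) periodic ring with finite $\mu_1$ and $\mu_0$ carries exactly $\mu_0 + \mu_1 - 1$ distinct power maps. Substituting $\mu_0 = 1$ gives exactly $\mu_1$ of them, which settles the counting half of the claim.

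For the second half I would verify that $\textup{Nil}(R) = \{0\}.$ If $a$ were a nonzero nilpotent element, say $a^k = 0,$ then potency gives $a = a^m = a^{m^2} = \cdots,$ and choosing $t$ with $m^t \geq k$ forces $a = a^{m^t} = 0,$ a contradiction; hence $R$ is reduced. Now I would feed this into the general principle already noted in Section 2: a power map $f$ over $R$ can be periodic as a function on $R^+$ only if $\textup{Per}(f) \subseteq \textup{Nil}(R).$ With $\textup{Nil}(R) = \{0\}$ this forces $\textup{Per}(f) = \{0\}$ for every $n,$ so no power map admits a nonzero period, and consequently none of the $\mu_1$ maps is periodic.

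The argument is short enough that there is no genuine obstacle; the only points meriting a moment's care are the one-line reducedness verification and the degenerate zero ring, which is the lone exception to the ``$\mu_0 + \mu_1 - 1$'' tally flagged in the footnote. I would dispose of the latter at the outset by observing that the zero ring has $\mu_1 = 1$ and its single power map has no nonzero period, so the statement holds there trivially as well.
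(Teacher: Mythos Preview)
Your proposal is correct and matches the paper's intended argument: the paper simply notes that $\mu_0 = 1$ for $J$-rings and declares the corollary immediate, and you have fleshed out exactly the two ingredients behind that remark---the count $\mu_0 + \mu_1 - 1 = \mu_1$ from the opening of Section~3 and the containment $\textup{Per}(f) \subseteq \textup{Nil}(R) = \{0\}$ from Section~2. The reducedness verification and the zero-ring aside are both sound and appropriately brief.
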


\begin{corollary}
If $R$ is a nil ring of finite index $\mu_0,$ then $R$ has $\mu_0$ distinct power maps. If, in addition, $R$ is also commutative and of bounded torsion, then
\[\mu_P \geq -\sum_{\substack{d\, |\, N \\ d \neq 1}} \mu(d)\left\lfloor\frac{\mu_0}{d}\right\rfloor = \sum_{\substack{d\, |\, \textup{rad}(N) \\ d \neq 1}} (-1)^{\omega(d) + 1} \left\lfloor \frac{\mu_0}{d}\right\rfloor,\]
where the notation of Theorem \ref{periodicpowermapbound} has been reprised.
\end{corollary}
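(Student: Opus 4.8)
The plan is to read the corollary off as the $\mu_1 = 1$ specialization of the results already in hand, so that the work is entirely bookkeeping. First I would confirm that a nil ring $R$ of finite index $\mu_0$ sits inside the framework of Section 3. Every $x \in R$ satisfies $x^{\mu_0} = 0,$ so each orbit $\{x^n : n \in \N\}$ is finite and $R$ is periodic; moreover $\textup{Nil}(R) = R,$ the index of $\textup{Nil}(R)$ equals $\mu_0,$ and, as recorded in the observation just before the corollary, the exponential period is $\mu_1 = 1$ (once $x^m = 0$ one has $x^{m+1} = x^m,$ and no smaller uniform period is possible). In particular $\min\{m : x^{m+1} = x^m\}$ is exactly the index of $x,$ so $\mu_0$ really is the onset of exponential periodicity, and both $\mu_0$ and $\mu_1$ are finite.

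The first assertion is then immediate from the count quoted at the opening of Section 3: a periodic ring with finite $\mu_0$ and $\mu_1$ has exactly $\mu_0 + \mu_1 - 1$ distinct power maps. Setting $\mu_1 = 1$ gives $\mu_0 + 1 - 1 = \mu_0,$ as claimed. For the degenerate zero ring, which the footnote excludes from that count, one checks directly that $\mu_0 = 1$ and there is a single power map, so the statement still holds.

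For the lower bound on $\mu_P$ I would simply verify the hypotheses of Theorem \ref{periodicpowermapbound} and then substitute $\mu_1 = 1$ into its conclusion. Commutativity is assumed; since $R$ is nil we have $\textup{Nil}(R) = R,$ so ``bounded torsion'' is exactly the statement that the exponent of $\textup{Nil}(R)^+$ is finite, whence $N,$ the least common multiple of the orders of the nilpotent elements, is the finite exponent of $R^+.$ It is worth noting that the lower-bound half of Theorem \ref{periodicpowermapbound} does not invoke unity, which matters because a nonzero nil ring cannot be unital. Theorem \ref{periodicpowermapbound} then yields
\[\mu_P \geq -\sum_{\substack{d\, |\, N \\ d \neq 1}} \mu(d)\left\lfloor\frac{\mu_0 + \mu_1 - 1}{d}\right\rfloor = \sum_{\substack{d\, |\, \textup{rad}(N) \\ d \neq 1}} (-1)^{\omega(d) + 1} \left\lfloor\frac{\mu_0 + \mu_1 - 1}{d}\right\rfloor,\]
and replacing $\mu_0 + \mu_1 - 1$ with $\mu_0$ gives precisely the stated inequality.

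There is no real obstacle, as the corollary is a transcription of Theorem \ref{periodicpowermapbound} and the Section 3 count in the case $\mu_1 = 1.$ The only care required is in the bookkeeping flagged above: checking that a nil ring of finite index is genuinely periodic with $\mu_0$ equal to its index, and matching the informal phrase ``bounded torsion'' to the precise finiteness hypothesis on $\textup{Nil}(R)^+$ used by Theorem \ref{periodicpowermapbound}.
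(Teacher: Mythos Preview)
Your proposal is correct and follows the same approach as the paper: the paper simply remarks that $\mu_1 = 1$ for nil rings and says this ``quickly leads to'' the corollary, so your argument is just a careful unpacking of that one-line observation. Your added bookkeeping (the zero-ring edge case, noting that the lower-bound half of Theorem~\ref{periodicpowermapbound} does not require unity, and matching ``bounded torsion'' to the finite-exponent hypothesis) is sound and more explicit than the paper itself.
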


\section{Miscellaneous Examples}

\subsection{Weakly periodic rings that annihilate $\textup{Nil}(R)$}

Theorem 2.3 remains true if we swap out the torsionality of $\textup{Nil}(R)$ for the condition that $xr = 0 = rx$ for all $x \in R$ and $r \in \textup{Nil}(R).$ (The nilpotent elements are still central here.) In this case, $(x + r)^n = x^n + r^n.$ So $\alpha_n(x) = x^n$ is periodic for every integer $n \geq 2,$ and the periods of $\alpha_n$ are the nilpotent elements with index at most $n.$ It follows that a weakly periodic ring in which $R\cdot\textup{Nil}(R) = 0 = \textup{Nil}(R)\cdot R$ is a commutative periodic ring, albeit not necessarily torsion.

We should emphasize that $R$ being a two-sided annihilator of $\textup{Nil}(R)$ is pivotal to the argument that $R$ is nilperiod. For instance, consider the Klein four-group $\mathbb{V} = \{0, a, b, c\}$ presented additively so that $R^+ = \Z_2 \oplus \Z_2$ and endowed with the multiplication given by $0x = cx = 0$ and $ax = bx = x$ for all $x \in \mathbb{V}.$ Observe that $\text{Pot}(R) = \{a, b\}$ and $\text{Nil}(R) = \{0, c\},$ and yet the power maps over $R$ are merely ``quasiperiodic'' over $\text{Pot}(R)$ in the sense that $(x + c)^n = x^n + c$ for all $x \in R\backslash\{0, c\}$ and $n \in \N.$ This example due to Bell \cite{Bell} is notable for the one-sided orthogonality of $\textup{Nil}(R)$ and $\textup{Pot}(R).$ Various aspects of such rings are discussed in \cite{DY} and \cite{HTY}.

\subsection{Corbas ($p, k, \phi$)-rings}

\renewcommand{\arraystretch}{1.5}
Let us return to Example 2.2, where
\begin{equation}
(a, b)^n = \left\{\begin{array}{ll} (a^n, na^{n - 1}b) & \text{if}\ \phi\ \text{is the identity automorphism,} \\ \left(a^n, b\phi(a^{n - 1})\dfrac{a^n\phi(a^{-n}) - 1}{a\varphi(a^{-1}) - 1}\right) & \text{otherwise},\end{array}\right.
\end{equation}
for all $a \in \F_{p^k}^{\times},$ $b \in \F_{p^k},$ and $n \in \N.$ Since every non-nilpotent element of $R$ is potent, $\mu_0 = 2.$ To ascertain $\mu_1,$ we need to calculate the smallest positive integer $\mu_1$ such that $a^{1 + \mu_1} = a$ and $(1 + \mu_1)a^{\mu_1} = 1$ for every $a \in \F_{p^k}^{\times}.$ Recall that $\F_{p^k}$ has characteristic $p,$ and the multiplicative group $\F_{p^k}^{\times}$ is a cyclic group of order $p^{k} - 1.$ Hence $\mu_1 = \text{lcm}(p, p^{k} - 1) = p(p^k - 1).$ There are thus $p^{k + 1} - p + 1$ distinct power maps over $R.$

Out of these, $p^k - 1$ are periodic when $\phi$ is the identity. To directly see why, note that the only way for the equation
\[\left((a, b) + (0, y)\right)^n = (a^n, na^{n - 1}(b + y)) = (a, b)^n = (a^n, na^{n - 1}b)\]
to hold over all of $R$ is for $n$ to be a multiple of $p,$ in which case $(a, b)^n = (a^n, 0).$ This amount matches the summation derived in Theorem \ref{periodicpowermapbound} since every nonzero nilpotent element of $R$ has order $p$ and so
\begin{equation}
\sum_{\substack{d\, |\, p \\ d \neq 1}} (-1)^{\omega(d) - 1} \left\lfloor\frac{\mu_0 + \mu_1 - 1}{d}\right\rfloor = (-1)^{\omega(p)-1}\left\lfloor\frac{p^{k+1} - p + 1}{p}\right\rfloor = p^k - 1.
\end{equation}

\subsection{Galois Rings}

Let $R = \text{GR}(p^k, d),$ the unique Galois extension of $\Z/p^k\Z$ of degree $d,$ which is a local ring of characteristic $p^k.$ It is well-known that the unique maximal ideal of $R$ is the principal ideal $(p),$ which is entirely comprised of all multiples of $p,$ and that every non-nilpotent element is a unit. Furthermore, $R^{\times} \cong G_1 \times G_2,$ where $G_1$ is a cyclic group of order $p^d - 1$ and
\begin{equation}
\label{groupofunits}
G_2 \cong \left\{\begin{array}{ll} C_2 \times C_{2^{k - 2}} \times (C_{2^{k - 1}})^{d - 1} & \text{if}\ p = 2\ \text{and}\ k \geq 3, \\ (C_{p^{k - 1}})^d & \text{otherwise}.\end{array}\right.
\end{equation}
If $d = 1,$ then $R \cong \Z/p^k\Z.$ This case is discussed in the next subsection. If $k = 1,$ then $R \cong \F_{p^d},$ which by Corollary \ref{jring} has $p^d - 1$ distinct power maps, none of which are periodic. Finally, if $d, k > 1,$ then $\mu_0 = k$ and $\mu_1 = \text{lcm}(p^d - 1, p^{k - 1}) = p^{k - 1}(p^d - 1).$ There are thus $p^{k - 1}(p^d - 1) + k - 1$ distinct power maps over $R$ in this case, and since $R$ is a finite commutative ring with unity, we can apply Theorem \ref{periodicpowermapbound} to see that
\begin{equation}
\mu_P(\text{GR}(p^k, d)) = (-1)^{\omega(p) - 1}\left\lfloor \frac{p^{k - 1}(p^d - 1) + k - 1}{p}\right\rfloor = p^{k - 2}(p^d - 1) + \left\lfloor\frac{k - 1}{p}\right\rfloor
\end{equation}

\subsection{The integers modulo $n$}

The enumeration of the distinct power maps over $\Z/n\Z$ is a folklore result, but we include it below for completeness. On the other hand, a description of the periodic power maps, while elementary, is perhaps new or at least little-known.

Let $n$ be a positive integer and let $p_1^{\beta_1}p_2^{\beta_2} \cdots p_v^{\beta_v}$ be its prime factorization. By the Chinese Remainder Theorem,
\begin{equation}
\Z/n\Z \cong \Z/p_1^{\beta_1}\Z\times\Z/p_2^{\beta_2}\Z\times\cdots\times\Z/p_v^{\beta_v}\Z.
\end{equation}
Based on (\ref{groupofunits}) and the discussion surrounding it, $\mu_1(\Z/p_i^{\beta_i}\Z) = \lambda(p_i^{\beta_i}),$ where $\lambda$ is the Carmichael function defined on prime powers $p^{\beta}$ by
\[\lambda(p^{\beta}) = \left\{\begin{array}{ll} 2^{\beta - 2} & \text{if}\ p = 2\ \text{and}\ \beta \geq 3, \\ p^{\beta - 1}(p - 1) & \text{otherwise}. \end{array}\right.\]
It follows that the exponential period of $\Z/n\Z$ is
\begin{equation}
\mu_1(\Z/n\Z) = \text{lcm}\!\left(\lambda(p_1^{\beta_1}), \lambda(p_2^{\beta_2}), \ldots, \lambda(p_v^{\beta_v})\right) = \lambda(n),
\end{equation}
whereas the onset of exponential periodicity is $E(n) := \max\beta_i,$ that is, the index of
\begin{equation}
\text{Nil}(\Z/n\Z) \cong \text{Nil}\!\left(\Z/p_1^{\beta_1}\Z\right)\times\text{Nil}\!\left(\Z/p_2^{\beta_2}\Z\right)\times\cdots\times\text{Nil}\!\left(\Z/p_v^{\beta_v}\Z\right).
\end{equation}
There are thus $\lambda(n) + E(n) - 1$ distinct power maps over $\Z/n\Z.$ Table 2 collects these statistics for some simple values of $n.$ This settles Blomberg and Whitmore's conjectures for sequence A109746 on the OEIS \cite{OEIS}.

\begin{table}
\centering
\caption{Some power map enumerations in $\Z/n\Z$}
\resizebox{\columnwidth}{!}{%
\begin{tabular}{l | l  l  l  l}
\hline
\textit{Value of $n$} & $\lambda(n)$ & $E(n)$ & \# \textit{of power maps} & \# \textit{that are periodic} \\ \hline \hline
prime $p$ & $p - 1$ & 1 & $p - 1$ & 0 \\
$2p$ (for odd prime $p$) & $p - 1$ & 1 & $p - 1$ & 0 \\
$\prod\limits_{\text{prime factors}\ p_i} p_i$ & lcm$\{p_i - 1\}$ & 1 & lcm$\{p_i - 1\}$ & 0 \\
$p^2$ (for any prime $p$) & $p^2 - p$ & 2 & $p^2 - p + 1$ & $p - 1$ \\
$2^k, k > 2$ & $2^{k - 2}$ & $k$ & $2^{k - 2} + k - 1$ & $2^{k - 3} + \left\lfloor \frac{k - 1}{2} \right\rfloor$ \\
$p^k, k > 2$ (for odd prime $p$) & $p^{k - 1}(p - 1)$ & $k$ & $p^{k - 1}(p - 1) + k - 1$ & $p^{k - 2}(p - 1) + \left\lfloor \frac{k - 1}{p} \right\rfloor$ \\ \hline
\end{tabular}
}
\end{table}

To help us see which power maps over $\Z/n\Z$ are periodic, we recall that $\Z/p_i^{\beta_i}\Z$ is a field precisely when $\beta_i = 1.$ For this reason, $p_i$ divides the order of some nonzero nilpotent element of $\Z/n\Z$ if and only if $\beta_i > 1$ (which is equivalent to the condition that $p_i\, |\, (n/\text{rad}(n))$). Hence, by Theorem \ref{periodicpowermapbound},
\begin{equation}
\mu_P(\Z/n\Z) = \sum_{\substack{d\, |\, \textup{rad}(n/\text{rad}(n)) \\ d \neq 1}} (-1)^{\omega(d) - 1} \left\lfloor\frac{\lambda(n) + E(n) - 1}{d}\right\rfloor.
\end{equation}

\subsection{Matrix rings over fields}

Consider $R = \mathcal{M}_n(\F_q),$ where $q$ is a prime power $p^k.$ Almkvist \cite{Almkvist} proved that $\mu_1(\mathcal{M}_n(\F_2))$ is the exponent of $\text{GL}(n, \F_2).$ Here we modify Almkvist's methods to show that $\mu_1(\mathcal{M}_n(\F_q))$ is the exponent of $\text{GL}(n, \F_q)$ and $\mu_0(\mathcal{M}_n(\F_q)) = n$ in general.

Let $\tau \in R,$ let $\psi(t) \in \F_q[t]$ be the minimal polynomial of $\tau,$ and let $ \psi_1(t)^{\beta_1}\cdots\psi_v(t)^{\beta_v}$ be the prime factorization of $\psi(t).$ Treat $\tau$ as a linear operator of the vector space $V := \F_q^n$ and set $V_i = \ker \psi_i(\tau)^{\beta_i}$ for each $i.$ Then $V = \bigoplus_{i = 1}^{v} V_i$ and each $V_i$ is invariant under $\tau.$ For each $i,$ let $\tau_i: V_i \rightarrow V_i$ be the restriction of $\tau$ to $V_i$ so that $\psi_i(\tau_i)^{\beta_i}$ vanishes. It suffices to flesh out the dynamics of the sequence $\{\tau_i^j\}_{j = 1}^{\infty}.$

If $\psi_i(t) = t,$ then $\tau_i$ is nilpotent and there is nothing more to prove, so assume instead that $\psi_i(t) \neq t.$ Suppose that $\deg\psi_i(t) = m.$ Then $\psi_i(t)\, |\,(t^{q^m} - t)$ since the product of all monic irreducible polynomials in $\F_q[t]$ with degree dividing $m$ is $t^{q^m} - t.$ Hence $(\tau_i^{q^m} - \tau_i)^{\beta_i} = 0,$ from which it follows that $\tau_i$ is potent. Moreover, because $\beta_i \leq n \leq p^{\lceil \log_p n\rceil},$ we have that
\begin{equation}
(t^{q^m} - t)^{\beta_i}\, |\, (t^{q^m} - t)^{p^{\lceil \log_p n \rceil}} = t^{p^{\lceil \log_p n \rceil}q^m} - t^{p^{\lceil \log_p n \rceil}}.
\end{equation}
It follows that
\begin{equation}
\tau_i^{p^{\lceil \log_p n \rceil}q^m} = \tau_i^{p^{\lceil \log_p n \rceil}},
\end{equation}
and so the exponential periodicity of $R$ is a divisor of
\begin{equation}
\label{glnexponent}
\text{lcm}\{p^{\lceil \log_p n \rceil}(q - 1), p^{\lceil \log_p n \rceil}(q^2 - 1), \ldots, p^{\lceil \log_p n \rceil}(q^n - 1)\} = p^{\lceil \log_p n \rceil}\text{lcm}\{q - 1, q^2 - 1, \ldots, q^n - 1\}.
\end{equation}
Yet (\ref{glnexponent}) is precisely the exponent of $\text{GL}(n, \F_q)$ (cf \cite[Corollary 1]{Darafsheh}), and so this yields $\mu_1(R).$ Finally, $\mu_0(R)$ is the largest possible index of a nilpotent operator on a subspace of $V,$ which is $n.$

We finish with a sketch of a constructive proof for the nonexistence of periodic power maps over $\mathcal{M}_n(K)$ that applies to all fields. Let $A \neq 0$ be nilpotent. The Jordan canonical form $J$ of $A$ is a strictly lower triangular matrix with its unital entries lying on the subdiagonal. Now let $C$ be the companion matrix of the polynomial $p(t) = t^n - 1.$ The only nonzero entries of $C$ are at entry $(1, n),$ which is 1, and the subdiagonal completely populated by 1, and so $C - J$ is singular due to having zero-rows. Let $S$ be the change of basis matrix for which $A = SJS^{-1}.$ Then for every $m \in \N,$
\begin{equation}
(SCS^{-1} - A)^m = S(C - J)^mS^{-1}
\end{equation}
is singular whereas $(SCS^{-1})^m = SC^mS^{-1}$ is not.

\section{Concluding Remarks}

While we have explicitly specified the periodic power maps over finite commutative rings with unity, things quickly turn more exotic in broader classes of rings. For instance, Example \ref{hypothesisrejection} shows that $\mu_P$ can potentially reach the upper bound of $\mu_1 + \mu_0 - 2$ where every non-identity power map is periodic. In any event, we pose the following question.

\begin{question}
What is the distribution of $\mu_P$ over random finite commutative rings?
\end{question}

Investigations into the oscillatory behavior of generic power maps over rings can proceed in at least two directions. One is taxonomic: can we achieve at least a partial characterization of the nilperiod rings? Another is combinatorial: is it possible to detect and enumerate the periodic power maps over any periodic ring? To the author's surprise, these questions appear mostly unexplored. We hope that this article generates some interest in what may potentially be a rich and attractive topic.


\bibliographystyle{plain}

\end{document}